\newtheorem{thm}{Theorem}
\newtheorem{prop}[thm]{Proposition}
\newtheorem{cor}[thm]{Corollary}
\newtheorem{lem}[thm]{Lemma}
\newtheorem{conj}[thm]{Conjecture}
\newsavebox{\qedB}
\sbox{\qedB}{\setlength{\unitlength}{1mm}
 \begin{picture}(4,4)(0,0)
  \thinlines
  {\put(0,0){\framebox(2.83,2.83){}}}%
  {\put(1.17,1.17){\framebox(2.83,2.83){}}}%
  {\put(0,0){\framebox(4,4){}}}%
  {\put(1.17,1.17){{\rule{1ex}{1ex} }}}%
 \end{picture}}
\newcommand{\QEDB}{\ifmmode\def\next{\tag"\usebox{\qedB}"}%
 \else\let\next=\relax
 {\unskip\nobreak\hfil\penalty50
 \hskip2em\hbox{}\nobreak\hfil\usebox{\qedB}
 \parfillskip=0pt \finalhyphendemerits=0\penalty-100\bigskip}\fi\next}
\newcommand{\bprop}{\begin{prop}}
\newcommand{\eprop}{\end{prop}}
\newcommand{\bcor}{\begin{cor}}
\newcommand{\ecor}{\end{cor}}
\newcommand{\blem}{\begin{lem}}
\newcommand{\elem}{\end{lem}}
\title{The characteristic sequence of the integers that are the sum of two squares is not morphic}
\authorrunning{}
\author{Shuo Li}
\institute{Laboratoire de Combinatoire et d'Informatique Mathématique,\\
Université du  Québec \`a Montréal,\\
CP 8888 Succ. Centre-ville, Montréal (QC) Canada H3C 3P8\\
}
\begin{document}

\maketitle


\begin{abstract}
Let $(s_2(n))_{n\in \mathbb{N}}$ be a $0,1$-sequence such that, for any natural
number $n$, $s_2(n) = 1$ if and only if $n$ is a sum of two squares. In a
recent article, Tahay proved that the sequence $(s_2(n))_{n\in \mathbb{N}}$ is not $k$-automatic for any integer $k$, and asked if this sequence can be morphic. In this note, we give a negative answer to this question.
\end{abstract}

\section{Introduction}

For any integer $k\geq 1$, let us define the $0,1$-sequence $(s_k(n))_{n \in \mathbb{N}}$ such that, for any $n \in \mathbb{N}$, $s_k(n)=1$ if $n$ can be written as a sum of $k$ squares and $s_k(n)=0$ if not. From the definition, the sequence $(s_1(n))_{n \in \mathbb{N}}$ characterizes all squares in $\mathbb{N}$, while $(s_2(n))_{n \in \mathbb{N}}$ and $(s_3(n))_{n \in \mathbb{N}}$ characterize respectively the sequences A001481 and A004215 in OEIS~\cite{oeis}. It is conjectured by Bachet and proved by Lagrange in 1770 that every natural number can be written as a sum of four squares. Thus, $(s_k(n))_{n \in \mathbb{N}}$ is a constant sequence of $1$ for any $k \geq 4$. The automaticity and the morphicity of $(s_k(n))_{n \in \mathbb{N}}$ for $k=1,2,3$ are discussed in \cite{Tahay}, where the notion of automaticity and the morphicity will be given in Section \ref{sec:def}. The $2$-automaticity of $(s_3(n))_{n \in \mathbb{N}}$ is proven in~\cite{Cobham} and the non-automaticity of $(s_1(n))_{n \in \mathbb{N}}$ is proven in~\cite{Minsky}~\cite{Ritchie}. In \cite{Tahay}, Tahay proved that $(s_1(n))_{n \in \mathbb{N}}$ is morphic and $(s_2(n))_{n \in \mathbb{N}}$ is not automatic. Notice that the non-automaticity of $(s_2(n))_{n \in \mathbb{N}}$ can also be established from the results in~\cite{Jakub2020}. However, the morphicity of $(s_2(n))_{n \in \mathbb{N}}$ remains open.  

In this note, we first give a criterion of the non-morphicity of a sequence, then prove non-morphicity of $(s_2(n))_{n \in \mathbb{N}}$ as an application of this criterion, which answers a question of Tahay asked in~\cite{Tahay}. Moreover, using the same criterion, we prove the non-morphicity of $(s'_2(n))_{n \in \mathbb{N}}$, which characterize the integers that are the sum of two {\em non-zero} squares.

Notice that the sequence $(s_2(n))_{n \in \mathbb{N}}$ is {\em multiplicative} (see, for example \cite{Tahay}). By multiplicative, we mean, for any pair of coprime integers $(p,q)$, one has $s_2(p)s_2(q)=s_2(pq)$. The (non)-automaticity of (completely) multiplicative sequences has been intensively studied during the recent years, for example, \cite{HU201773} \cite{ALLOUCHE2018356} \cite{LI2020388} \cite{Oleksiy2} \cite{Oleksiy} \cite{Jakub2020} \cite{Mullner22}, while the (non)-morphicity of (completely) multiplicative sequences, from the author's knowledge, is rarely studied in the literature. This note gives an example to show the non-morphicity of a multiplicative sequence.

\section{Definitions and notation} 
\label{sec:def}
By \emph{alphabet}, we mean a finite set. Its elements are called \emph{letters}. Let $A^*$ denote the free monoid generated by $A$ under concatenations having neutral element the empty word $\varepsilon$. For any word $w=w_0w_1w_2\cdots w_{n-1} \in A^*$, the \emph{length} of $w$ is the integer $|w| = n$. The length of any \emph{infinite words} is infinite. For any letter $a \in A$, let $|w|_a$ denote the number of occurrences of the letter $a$ in $w$. A \emph{prefix} of a word $w=w_0w_1w_2\cdots$ is a finite string $w_0w_1w_2\cdots w_t$ such that $t \leq |w|$. For any pair of integers $i \leq j$, let $w[i,j]=w_iw_{i+1}\cdots w_j$.

Let $A$ and $B$ be two alphabets. A \emph{morphism} $\phi$ is a map $A^* \to B^*$ satisfying $\phi(xy)=\phi(x)\phi(y)$ for any pair of elements $x, y$ in $A^*$. The morphism $\phi$ is called \emph{$k$-uniform} if for all elements $a \in A$, $|\phi(a)|=k$, and it is called \emph{non-uniform} otherwise. A morphism $\phi$ is called a \emph{coding} function if it is $1$-uniform, and it is called \emph{non-erasing} if $\phi(a) \neq \varepsilon$ for all $a \in A$. For any positive integer $k$, by $\phi^k$, we mean the composition of $k$ times of the morphism $\phi$. Particularly, $\phi^0$ is the identity function.

Let $A$ be a finite alphabet, and let $(a_n)_{n \in \mathbb{N}}$ be an infinite sequence over $A$. It is called \emph{morphic} if there exists an alphabet $B$, a letter $ b \in B$, a word $w \in B^*$, a non-erasing morphism $\phi: B^{*} \to B^{*}$, and a coding function $\psi: B \to A$, such that $\phi(b)=bw$ and $$(a_n)_{n \in \mathbb{N}}= \lim_{i \to \infty}\psi(\phi^i(b)).$$ Moreover, for any positive integer $k \geq 2$, the sequence $(a_n)_{n \in \mathbb{N}}$ is called \emph{$k$-automatic} if $\phi$ is $k$-uniform, is called \emph{automatic} if $\phi$ is $k$-uniform for some integer $k \geq 2$, and it is called \emph{non-automatic} if it is not $k$-uniformly morphic for any integer $k \geq 2$.

\section{Non-morphicity of sequences}
\label{sec:proof}

The purpose of this section is to prove the following theorem: 

\begin{thm}
\label{morphic}
Let $(a_n)_{n \in \mathbb{N}}$ be a morphic sequence over the alphabet $A$. If there exists $a \in A$ and two positive numbers $0 < \gamma < 1$, $C >0$ such that $$|a[0,N]|_a \sim C\frac{N}{(\log(N))^{\gamma}},$$ then $(a_n)_{n \in \mathbb{N}}$ is not morphic.  
\end{thm}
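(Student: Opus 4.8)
\medskip
\noindent\emph{Strategy of proof.}
The plan is to argue by contradiction, extracting from the definition of ``morphic'' two rigidity properties of letter-counting functions that are incompatible with an asymptotic of the shape $CN/(\log N)^{\gamma}$ when $\gamma\notin\Z$. So assume $(a_n)_{n\in\N}$ is morphic and write $(a_n)_{n\in\N}=\psi(x)$, where $x=\lim_i\phi^i(b)$ for a non-erasing morphism $\phi\colon B^*\map B^*$ with $\phi(b)=bw$, $w\ne\varepsilon$, and $\psi\colon B\map A$ a coding. Let $B'\subseteq B$ be the set of letters occurring in $x$; since $\phi(x)=x$, $\phi$ maps $(B')^*$ into $(B')^*$, so the incidence matrix of $\phi$ restricted to $B'$ is well defined; call it $M$ (the $B'\times B'$ non-negative integer matrix with $M_{cd}=|\phi(d)|_c$). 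Put $S=\psi^{-1}(a)\cap B'$ and $n_i=|\phi^i(b)|$. As $\phi^i(b)$ is a prefix of $x$ and $\psi$ is a coding, $|a[0,n_i-1]|_a=|\phi^i(b)|_S=:u_i$, and $u_i=\mathbf 1_S^{\mathsf T}M^i\mathbf e_b$, $n_i=\mathbf 1^{\mathsf T}M^i\mathbf e_b$, where $\mathbf e_b$ is the standard basis vector at $b$ and $\mathbf 1_S$ the indicator of $S$. In particular $u_i$ and $n_i$ are \emph{generalized power sums}: each has the form $\sum_\lambda p_\lambda(i)\,\lambda^i$, the sum over the eigenvalues $\lambda$ of $M$, with each $p_\lambda$ a polynomial.

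Next I would bring in the hypothesis. Because $\phi$ is non-erasing and $w\ne\varepsilon$, the sequence $(n_i)$ is strictly increasing, so $n_i\to\infty$; and by the classical growth theory of D0L systems, $n_i=\Theta(i^{e}\theta^{i})$ for a single integer $e\ge0$ and $\theta\ge1$ the spectral radius of $M$ (genuinely attained, as every letter of $B'$ is reachable from $b$), with $\theta>1$, or else $\theta=1$ and $e\ge1$. Evaluating $|a[0,N]|_a\sim CN/(\log N)^{\gamma}$ at $N=n_i-1$, and using that $t\mapsto t/(\log t)^{\gamma}$ varies slowly enough that $(n_i-1)/(\log(n_i-1))^{\gamma}\sim n_i/(\log n_i)^{\gamma}$, yields
\[
u_i\ \sim\ C\,\frac{n_i}{(\log n_i)^{\gamma}}\qquad(i\to\infty);
\]
no interpolation between the indices $n_i$ is needed — it suffices to test the hypothesis along this subsequence. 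Now I would derive a contradiction, splitting on $\theta$. If $\theta=1$: every eigenvalue of $M$ is an algebraic integer all of whose $\Q$-conjugates are again eigenvalues of $M$, hence of modulus $\le1$, so each is $0$ or a root of unity by Kronecker's theorem. Then for a suitable period $L$ and every residue $r$ modulo $L$, the maps $j\mapsto u_{Lj+r}$ and $j\mapsto n_{Lj+r}$ are polynomials $P_r$, $Q_r$ for $j$ large, with $Q_r(j)\to\infty$. The displayed asymptotic forces $P_r(j)\sim C\,Q_r(j)/(\log Q_r(j))^{\gamma}=\Theta\!\big(j^{\,\deg Q_r}(\log j)^{-\gamma}\big)$, which is impossible: a nonzero polynomial is $\Theta$ of a natural-number power of $j$, never of $j^{\,\deg Q_r}(\log j)^{-\gamma}$.

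If $\theta>1$: then $\log n_i=i\log\theta+O(\log i)\sim i\log\theta$, so the displayed asymptotic reads $u_i=\Theta\!\big(i^{\,e-\gamma}\theta^{i}\big)$ with $e-\gamma\notin\Z$. Write $u_i=\theta^{i}T(i)+\sum_{|\lambda|<\theta}p_\lambda(i)\lambda^{i}$ with $T(i)=\sum_{|\lambda|=\theta}p_\lambda(i)(\lambda/\theta)^{i}$; the second sum is $o(\theta^{i}i^{-K})$ for every $K$, so $\theta^{-i}u_i=i^{D}R(i)+o(i^{D})$, where $D$ is the top degree occurring among the $p_\lambda$ with $|\lambda|=\theta$ and $R(i)=\sum_k c_k\omega_k^{\,i}$ is the corresponding leading part: a bounded exponential polynomial with distinct unit-modulus frequencies $\omega_k$ and not all $c_k$ zero. (If no eigenvalue of modulus $\theta$ occurred, $u_i$ would be $o(\theta^{i}i^{-K})$, contradicting $u_i=\Theta(i^{\,e-\gamma}\theta^{i})$.) The key point is that $R$ does not tend to $0$: indeed $\frac1N\sum_{i\le N}|R(i)|^{2}\to\sum_k|c_k|^{2}>0$ by orthogonality of the $\omega_k^{\,i}$, and the $\limsup$ dominates this average, so $\limsup_i|R(i)|>0$. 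Hence $\theta^{-i}u_i$ is $O(i^{D})$ but not $o(i^{D})$, whereas $\Theta(i^{\,e-\gamma})$ would make $\theta^{-i}u_i$ asymptotically comparable to $i^{\,e-\gamma}$; comparing the exponents $D\in\Z$ and $e-\gamma\notin\Z$ rules this out. Together with the case $\theta=1$, this shows $(a_n)_{n\in\N}$ cannot be morphic.

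Most of the above is bookkeeping; the substantive inputs are (i) the D0L growth dichotomy $n_i=\Theta(i^{e}\theta^{i})$, which rests on the Perron--Frobenius analysis of a possibly reducible non-negative integer matrix — in particular on the fact that imprimitive components perturb only the constant, not the polynomial exponent — and (ii) the sharp structural facts about generalized power sums used in each case: that over $\theta=1$ they are eventually polynomial along arithmetic progressions, and that over $\theta>1$ their top-order oscillation cannot decay to $0$. I expect item (ii) — essentially the statement that a morphic counting function can only carry \emph{integer} powers of $\log$ — to be where the real work sits.
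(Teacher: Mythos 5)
Your proposal is correct and follows essentially the same route as the paper: evaluate the hypothesis along the prefix lengths $|\phi^i(b)|$, express both the length and the letter count as generalized power sums via the incidence matrix (Perron--Frobenius / D0L growth), and derive a contradiction from the fact that the exponent $e-\gamma$ forced by $(\log N)^{-\gamma}$ with $0<\gamma<1$ is not an integer. The only substantive difference is at the dominant-eigenvalue level: where the paper cites Bell's remark that the top contribution reduces to a single real eigenvalue, you handle the possible oscillation directly, via Kronecker's theorem and arithmetic progressions when $\theta=1$ and via the $L^2$-orthogonality bound $\limsup_i|R(i)|>0$ when $\theta>1$, which makes your treatment of that step more self-contained.
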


\begin{prop}
\label{growth}
For any alphabet $A$ and any morphism $\phi: A^* \to A^*$, if there exists $a \in A$ and $w \in A^*$ such that $\phi(a)=aw$, then there exist real numbers $\alpha \geq 1$, $G>0$ and non-negative integers $l$ and $T \geq 1$ such that $$|\phi^{Tk}(a)|\sim G(Tk)^l\alpha^{Tk}.$$ Moreover, for any $b \in A$, there exist real numbers $\beta \leq \alpha$, $G'>0$, a non-negative integer $m$ and finitely many complex numbers $\beta_1, \beta_2, \cdots, \beta_s$ such that $|\beta_i|=\beta$ for all $i$ and that $$|\phi^{Tk}(a)|_b\sim  G'(Tk)^m\sum_{i=1}^s\beta_i^{Tk}.$$ Moreover, if $\beta=\alpha$, then there exists a unique real number $\beta_1=\beta$.  
\end{prop}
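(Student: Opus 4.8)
The core idea is linear algebra over the incidence (substitution) matrix of $\phi$. Let $A = \{b_1,\dots,b_d\}$ and let $M \in \N^{d\times d}$ be the matrix with $M_{ij} = |\phi(b_j)|_{b_i}$, so that the Parikh (abelianization) vector of $\phi^n(a)$ is $M^n v_a$, where $v_a$ is the coordinate vector of $a$. Then $|\phi^n(a)| = \mathbf{1}^{\mathsf T} M^n v_a$ and $|\phi^n(a)|_b = e_b^{\mathsf T} M^n v_a$, so both quantities are fixed linear functionals applied to the vector sequence $M^n v_a$. The first step is therefore to recall the classical fact (Perron--Frobenius / Jordan form theory for non-negative integer matrices; see e.g.\ the theory underlying Pansiot's classification of growth of D0L systems) that each entry of $M^n$ is a generalized power sum $\sum_i p_i(n)\lambda_i^n$, where the $\lambda_i$ are the eigenvalues of $M$ and the $p_i$ are polynomials with $\deg p_i$ bounded by the size of the corresponding Jordan block minus one; consequently any fixed linear combination $u^{\mathsf T}M^n v$ has the same shape.

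The second step is to pass from the \emph{full} asymptotic expansion to the \emph{dominant} term, and here the hypothesis $\phi(a)=aw$ is essential: it forces $|\phi^n(a)|$ to be non-decreasing and in particular unbounded unless $w$ eventually dies, so the largest modulus $\alpha$ among eigenvalues contributing to $|\phi^n(a)|$ satisfies $\alpha \ge 1$. Writing the dominant eigenvalues (those of modulus exactly $\alpha$ that actually appear with maximal polynomial degree $l$) as $\alpha e^{2\pi i\theta_1},\dots$, the quantity $|\phi^n(a)|/n^l\alpha^n$ is, up to a vanishing error, a sum of the form $\sum_j c_j e^{2\pi i n\theta_j}$. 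To obtain a clean asymptotic equivalence $|\phi^n(a)|\sim G(Tk)^l\alpha^{Tk}$ along an arithmetic progression, I would invoke the Skolem--Mahler--Lech theorem, or more elementarily the pigeonhole/simultaneous-approximation argument: the complex numbers $\lambda_i/\alpha$ of modulus $1$ that are roots of unity become constant along a suitable progression $n = Tk$ (take $T$ a common multiple of their orders), and those that are not roots of unity can be shown, using positivity of $|\phi^n(a)|$ together with a Kronecker/Weyl equidistribution argument, not to occur among the strictly dominant terms — a non-negative generalized power sum whose leading behaviour is $n^l\alpha^n$ cannot have an oscillating irrational-angle leading part without going negative infinitely often. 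After fixing such a $T$, the leading coefficient $G$ is a positive constant (positive because the sequence is positive and unbounded), giving the first claim with $\alpha\ge 1$, $l\ge 0$, $T\ge 1$.

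The third step repeats this for $|\phi^n(a)|_b = e_b^{\mathsf T}M^n v_a$. This is again a generalized power sum; its dominant modulus $\beta$ satisfies $\beta\le\alpha$ since $|\phi^n(a)|_b\le|\phi^n(a)|$. Using the \emph{same} $T$ (enlarging it to also clear the orders of the relevant roots of unity for this functional), the roots of unity of modulus $1$ among the $\lambda_i/\beta$ freeze, and we are left with $|\phi^n(a)|_b \sim G'(Tk)^m\sum_{i=1}^s\beta_i^{Tk}$ where the $\beta_i$ are exactly the dominant eigenvalues (all of modulus $\beta$), $m$ is the maximal polynomial degree attained among them, and $G'>0$. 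For the final sentence: if $\beta=\alpha$, then $|\phi^n(a)|_b$ and $|\phi^n(a)|$ have the same exponential rate; since $|\phi^n(a)|_b\ge 0$ and $\le |\phi^n(a)|\sim Gn^l\alpha^n$, the generalized power sum $\sum\beta_i^n$ (times $n^m$) must be non-negative and $O(n^{l-m}\alpha^n)$, which by the same positivity-forbids-oscillation argument as in step two forces the dominant modulus-$\alpha$ eigenvalues contributing to $|\phi^n(a)|_b$ to reduce to the single real eigenvalue $\alpha$ itself; i.e.\ $s=1$ and $\beta_1=\beta=\alpha$. (One should also note $\beta_1=\alpha$ is then genuinely a Perron eigenvalue of the relevant sub-structure, so it is real and positive, consistent with $\beta_1=\beta$.)

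**Main obstacle.** The routine part is the generalized-power-sum form of $M^n$; the delicate part is step two — ruling out oscillating leading terms with non-root-of-unity angles and then choosing a single progression modulus $T$ that simultaneously works for $|\phi^n(a)|$ and for every $|\phi^n(a)|_b$. The cleanest route is: first show via positivity (the sequences are non-negative and the dominant one is unbounded) that the strictly dominant eigenvalue behaviour cannot oscillate irrationally — otherwise $\limsup$ and $\liminf$ of the normalized sequence would differ and, worse, negativity would appear when $m<l$ is impossible — hence all eigenvalues of maximal (modulus, polynomial-degree) type are $\alpha$ times a root of unity; then take $T = \mathrm{lcm}$ of all these finitely many orders across all the finitely many functionals involved. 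I expect the write-up of this positivity-plus-equidistribution step to be where the real work lies.
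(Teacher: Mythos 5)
Your overall skeleton is the same as the paper's: the incidence matrix $M$, its Jordan form, and the resulting representation of $|\phi^n(a)|$ and $|\phi^n(a)|_b$ as generalized power sums $\sum_t p_t(n)v_t^n$. The paper, however, does not attempt to prove the delicate dominance statements itself: the first claim and the final uniqueness claim are obtained by citing Proposition 4.1 and Remark 1 of Bell's 2008 paper, and only the middle (generalized-power-sum) part is written out via Corollary 8.2.3 of Allouche--Shallit. You instead try to supply the missing arguments, and it is exactly there that your proposal has a genuine gap.

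The step you flag as ``the real work'' --- ruling out irrational-angle dominant eigenvalues --- is argued via positivity of the sequence: ``a non-negative generalized power sum whose leading behaviour is $n^l\alpha^n$ cannot have an oscillating irrational-angle leading part without going negative infinitely often.'' This principle is false as stated. The sequence $f(n)=2+\cos(n\theta)=2\cdot 1^n+\tfrac12 e^{in\theta}+\tfrac12 e^{-in\theta}$ with $\theta/2\pi$ irrational is a non-negative generalized power sum, bounded between $1$ and $3$, whose dominant terms (all of modulus $1$, degree $0$) include irrational angles, and it never goes negative. So positivity of the \emph{sequence} cannot force the dominant part to stop oscillating; if you instead read ``leading behaviour is $n^l\alpha^n$'' as already meaning $\sim Gn^l\alpha^n$, the argument becomes circular, since that asymptotic is precisely what you are trying to establish. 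What actually rules out such examples for morphisms is the \emph{matrix} structure, not the positivity of the scalar sequence: by the Frobenius normal form and Perron--Frobenius theory, every eigenvalue of a non-negative matrix whose modulus equals the spectral radius is that radius times a root of unity, and the dominant modulus for $\mathbf{1}^{\mathsf T}M^n e_a$ is the spectral radius of the submatrix indexed by the letters accessible from $a$. Taking $T$ to be the least common multiple of the orders of these finitely many roots of unity then freezes the peripheral contributions, giving $G>0$ from the known $\Theta(n^l\alpha^n)$ growth; the same observation disposes of the $\beta=\alpha$ uniqueness claim without any equidistribution argument. (For the second claim, with $\beta<\alpha$, no such control is needed or possible --- the statement deliberately allows arbitrary $\beta_i$ of modulus $\beta$, and the Jordan-form computation alone suffices, which is all the paper does there.) So your plan would go through if the positivity-plus-Weyl step were replaced by the peripheral-spectrum argument (or simply by the citation the paper uses); as written, that step does not hold up.
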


\begin{proof}
The first part a direct consequence of Remark 1 and Proposition 4.1 in \cite{Bell08}.\\ For the second part, from Corollary 8.2.3 in \cite{allouche_shallit_2003}, there exists a $d \times d$ matrix $M$, known as the incidence matrix of the morphism $\phi$ (see Definition 2 in \cite{Bell08} and Section 8.2 in \cite{allouche_shallit_2003}), such that for any $b \in A$, and any non-negative integer $m$, $$|\phi^k(a)|_b=\sum_{i,j}c_{i,j}m^{(k)}_{i,j},$$ where $M^k=(m^{(k)}_{i,j})_{1 \leq i,j \leq d}$ and $c_{i,j}$ are real numbers independent from $k$. Moreover, let $\{v_1,v_2, \cdots, v_r\}$ be the set of distinct eigenvalues of $M$, then $\alpha \geq |v_i|$ for any $i$. Let $J$ be the Jordan form of $M$, then, there are some matrix $S$ such that for any $k$, $M^k=S^{-1}J^kS$. Thus, for any pair of $1 \leq i,j \leq d$, there exist $r$ polynomials $p_{i,j,1}, p_{i,j,2}, \cdots, p_{i,j,r}$, such that $$m^{(k)}_{i,j}=\sum_{t=1}^r p_{i,j,r}(k)v_t^k.$$ Consequently, for any $b \in A$, there exist $r$ polynomials $p_{b,1}, p_{b,2}, \cdots, p_{b,r}$, such that $$|\phi^k(a)|_b=\sum_{t=1}^r p_{b,r}(k)v_t^k.$$ Thus, there exists a $\beta \in \mathbb{R}$ such that $|\phi^{kT}(a)|_b\sim  G'{kT}^m\sum_{i=1}^s\beta_i^{kT}$ and that $|\beta_i|=\beta$ for all $i$. For the uniqueness of $\beta_i$ in the case that $\beta=\alpha$, see Remark 1 in \cite{Bell08}.
\end{proof}

\begin{cor}
\label{sequence}
For any morphic sequence $(a_n)_{n \in \mathbb{N}}$ over the alphabet $A^*$, there exists an increasing sequence of integers $0 \leq N_1 \leq N_2 \leq \cdots \leq N_k \leq \cdots$, two real numbers $\alpha \geq 1$, $G>0$ and two non-negative integers $l$ and $T$ such that for any integer $k$, $N_k \sim G(Tk)^l\alpha^{Tk}$.\\
Moreover, for any $a \in A$, there exist real numbers $\beta \leq \alpha$, $G'>0$, a non-negative integer $m$ and finitely many complex numbers $\beta_1, \beta_2, \cdots, \beta_s$ such that $|\beta_i|=\beta$ for all $i$ and that $|a[0, N_k]|_a\sim  G'(Tk)^m\sum_{i=1}^s\beta_i^{Tk}.$ \\
Moreover, if $\beta=\alpha$, then there exists a unique real number $\beta_1=\beta$.  
\end{cor}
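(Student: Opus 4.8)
The plan is to read the statement off directly from Proposition~\ref{growth}, applied to the data defining $(a_n)_{n\in\mathbb{N}}$. Write $(a_n)_{n\in\mathbb{N}}=\lim_{i\to\infty}\psi(\phi^i(b))$ with $\phi\colon B^*\to B^*$ non-erasing, $\phi(b)=bw$, and $\psi\colon B\to A$ a coding. From $\phi(b)=bw$ one gets $\phi^{i+1}(b)=\phi^i(b)\,\phi^i(w)$, so each $\phi^i(b)$ is a prefix of $\phi^{i+1}(b)$, and the words $\phi^i(b)$ are prefixes of a single infinite word $u$ with $\psi(u)=(a_n)_{n\in\mathbb{N}}$; as this limit is infinite, $|\phi^i(b)|\to\infty$. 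Applying the first part of Proposition~\ref{growth} to $\phi$ and the letter $b$ yields $\alpha\ge1$, $G>0$, and integers $l\ge0$, $T\ge1$ with $|\phi^{Tk}(b)|\sim G(Tk)^l\alpha^{Tk}$. Put $N_k:=|\phi^{Tk}(b)|-1$. Then $N_k\ge0$, the sequence $(N_k)_k$ is non-decreasing because the $\phi^{Tk}(b)$ are nested prefixes, and since $|\phi^{Tk}(b)|\to\infty$ we have $N_k\sim|\phi^{Tk}(b)|\sim G(Tk)^l\alpha^{Tk}$; this is the first assertion.

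For the second assertion, fix $a\in A$. Since $\phi^{Tk}(b)$ is a prefix of $u$, the word $a[0,N_k]$ is exactly $\psi(\phi^{Tk}(b))$, of length $N_k+1=|\phi^{Tk}(b)|$, and because $\psi$ is a coding,
$$|a[0,N_k]|_a=\big|\psi(\phi^{Tk}(b))\big|_a=\sum_{c\in B\,:\,\psi(c)=a}\big|\phi^{Tk}(b)\big|_c.$$
I would then reuse the computation inside the proof of Proposition~\ref{growth}: with $M$ the incidence matrix of $\phi$ and $v_1,\dots,v_r$ its distinct eigenvalues, that proof yields, for each $c\in B$, polynomials $p_{c,1},\dots,p_{c,r}$ with $|\phi^{Tk}(b)|_c=\sum_{t=1}^r p_{c,t}(Tk)\,v_t^{Tk}$ and $|v_t|\le\alpha$ for every $t$. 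Summing over the $c$ with $\psi(c)=a$ gives
$$|a[0,N_k]|_a=\sum_{t=1}^r q_t(Tk)\,v_t^{Tk},\qquad q_t:=\sum_{c\,:\,\psi(c)=a}p_{c,t},$$
a polynomial-exponential sum of exactly the shape handled in Proposition~\ref{growth}. Extracting its dominant part — let $\beta$ be the largest modulus of a $v_t$ with $q_t\not\equiv0$, let $m$ be the largest degree among such $q_t$ with $|v_t|=\beta$, and let $\beta_1,\dots,\beta_s$ be those eigenvalues $v_t$ with $|v_t|=\beta$ and $\deg q_t=m$ — produces $|a[0,N_k]|_a\sim G'(Tk)^m\sum_{i=1}^s\beta_i^{Tk}$ with $|\beta_i|=\beta\le\alpha$, as claimed (the case in which all $q_t\equiv0$, i.e. $a$ occurs only finitely often, is trivial). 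When $\beta=\alpha$ the unique dominant eigenvalue is the spectral radius of $M$, so $s=1$ and $\beta_1=\alpha$, exactly as in Proposition~\ref{growth} via Remark~1 in \cite{Bell08}.

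The step requiring real care — and the main obstacle — is this last reduction of $\sum_{c\,:\,\psi(c)=a}|\phi^{Tk}(b)|_c$ to a single expression of the prescribed form: one may not simply add the asymptotic equivalents furnished by Proposition~\ref{growth} for the individual counts $|\phi^{Tk}(b)|_c$, since cancellation among the oscillating summands $\beta_i^{Tk}$ could lower the true order of growth. This is why the argument is routed through the exact identity $|a[0,N_k]|_a=\sum_{t=1}^r q_t(Tk)\,v_t^{Tk}$ and the dominant-term extraction is performed on it directly. A minor bookkeeping point is that a single $T$ serves for $|\phi^{Tk}(b)|$ and for all the letter counts at once — take a common multiple of the finitely many periods involved — which is already implicit in the statement of Proposition~\ref{growth}.
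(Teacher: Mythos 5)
Your proof is correct and follows essentially the same route as the paper: define $N_k$ from $|\phi^{Tk}(b)|$, apply the first part of Proposition~\ref{growth}, and express $|a[0,N_k]|_a$ as the sum of $|\phi^{Tk}(b)|_c$ over the letters $c$ with $\psi(c)=a$ before invoking the second part. Your one refinement — performing the dominant-term extraction on the exact polynomial--exponential identity rather than summing the individual asymptotic equivalents, to guard against cancellation among the oscillating terms — addresses a point the paper's proof passes over silently, and is a worthwhile precaution on the same path.
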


\begin{proof}
If $(a_n)_{n \in \mathbb{N}}$ is morphic, from the definition, there exists an alphabet $B$, a letter $ b \in B$, a word $w \in B^*$, a non-erasing morphism $\phi: B^{*} \to B^{*}$ and a coding function $\psi: B \to A$, such that $\phi(b)=bw$ and $(a_n)_{n \in \mathbb{N}}= \lim_{i \to \infty}\psi(\phi^i(b))$. For any positive integer $k$, let $N_k=|\phi^k(b)|$, then the first part is a direct consequence of the first part of Proposition \ref{growth}. For the second part, it is enough to note that for any $a \in A$ there exist $b_1, b_2,\cdots b_t \in B$ such that for any $k \geq 1$,
$$|a[0,N_k]|_a=\sum_{i=1}^t|\psi(\phi^i(b))|_{b_i}.$$
Then it follows the second part of Proposition \ref{growth}.
\end{proof}

\begin{proof}[Proof of Theorem \ref{morphic}]
With the same notation as previous, let $0 \leq N_1 \leq N_2 \leq \cdots \leq N_k \leq \cdots$ be defined as in Corollary \ref{sequence}. It is known that, 
$$N_k \sim G(Tk)^l\alpha^{Tk},$$
and that for any $a$ in the sequence,
$$|a[0, N_k]|_a\sim  G'(Tk)^m\sum_{i=1}^s\beta_i^{Tk},$$ where $|\beta_1|= |\beta_2|= \cdots= |\beta_s|=\beta \leq \alpha$.\\

If $\beta < \alpha$, then there exists $c' <0$, such that $$|a[0, N_k]|_a=O(N_k^{c'}) \nsim C\frac{N_k}{(\log(N_k))^{\gamma}}.$$

If $\beta=\alpha=1$, then, from Corollary \ref{sequence}, $|a[0, N_k]|_a \sim G'(Tk)^m$ and $N_k\sim G'(Tk)^l$. In this case
$$ G'(Tk)^m \nsim C\frac{G(Tk)^l}{(\log(G(Tk)^l))^{\gamma}},$$
for any $C$, $m$ and $l$.

If $\beta = \alpha >1$, then, from Corollary \ref{sequence}, $|a[0, N_k]|_a \sim G'(Tk)^m(\alpha)^{Tk}$ and $N_k\sim G'(Tk)^l(\alpha)^{Tk}$.
 $$(\log{N_k})^{\gamma} \sim (\log{GT^l}+l\log{k}+kT\log{\alpha})^{\gamma}\sim C(k)^{\gamma},$$for some constant $C$. However, since $0 < \gamma <1$, $$C\frac{N_k}{(\log{N_k})^{\gamma}} \sim C'(k)^{l-\gamma}\alpha^{kT} \nsim G'(kT)^m\alpha^{kT},$$ for any integer $m$.

\end{proof}

\begin{prop}\label{density}[\cite{Landau08}]
For any non-negative integer $N$, let $B(N)$ be the number of integers $n$ such that $0 \leq n \leq N$ and $s_2(n)=1$. Then, there exists a constant K (known as Landau-Ramanujan constant), such that $$B(N)\sim K\frac{N}{\sqrt{\log(N)}}.$$
\end{prop}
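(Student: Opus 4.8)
The plan is to recognize that Proposition~\ref{density} is nothing more than the classical Landau--Ramanujan theorem on the density of sums of two squares, so the proof amounts to citing \cite{Landau08} and, if a self-contained sketch is wanted, to recalling the analytic argument. First I would set up the relevant Dirichlet series: by multiplicativity of $s_2$ (as noted in the introduction) and the characterization that $n$ is a sum of two squares iff every prime $p \equiv 3 \pmod 4$ divides $n$ to an even power, the generating function $\sum_{s_2(n)=1} n^{-s}$ factors as an Euler product that, up to a factor holomorphic and non-vanishing in a neighbourhood of $s=1$, behaves like $\prod_{p \equiv 3(4)} (1-p^{-2s})^{-1/2} \cdot \prod_{p \equiv 1(4)} (1-p^{-s})^{-1/2}$, i.e.\ like $\big(\zeta(s) L(s,\chi_{-4})\big)^{1/2}$ times a correction. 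Here $\chi_{-4}$ is the non-principal character mod $4$, so $L(s,\chi_{-4})$ is entire and non-zero at $s=1$, while $\zeta(s)$ contributes a simple pole; hence the Dirichlet series has a singularity of the form $(s-1)^{-1/2}$ at $s=1$.

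The second step is to convert this analytic behaviour into the stated asymptotic for the counting function $B(N)$. This is exactly the situation handled by a Tauberian theorem of Landau type (or the Selberg--Delange method): if a Dirichlet series with non-negative coefficients extends to a region to the left of $\Re s = 1$ except for an algebraic singularity $(s-1)^{-\kappa}$ with a non-zero constant, then the partial sums satisfy $\sum_{n \le N} a_n \sim c\, N (\log N)^{\kappa - 1}$. Applying this with $\kappa = 1/2$ yields $B(N) \sim K\, N/\sqrt{\log N}$ for a positive constant $K$, the Landau--Ramanujan constant, whose value is an explicit (though complicated) Euler product over primes $p \equiv 3 \pmod 4$.

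Since the statement is attributed verbatim to \cite{Landau08}, the cleanest route is simply: \emph{This is the classical theorem of Landau; see \cite{Landau08}.} If the referee wants more, one can add the two-line Euler-product computation above together with a pointer to the Selberg--Delange method (e.g.\ Tenenbaum's book) for the Tauberian step. I do not anticipate any genuine obstacle here, since this is a century-old result used only as an input; the only mild care needed is to make sure the ``correction'' Euler factor (coming from the prime $2$ and from replacing $(1-p^{-2s})^{-1}$ by its square root on the $p \equiv 3(4)$ side) is holomorphic and non-vanishing at $s=1$, so that it affects only the constant $K$ and not the $N/\sqrt{\log N}$ order of growth.
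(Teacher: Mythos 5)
The paper offers no proof of this proposition at all---it is stated purely as a quotation of Landau's 1908 theorem with the citation \cite{Landau08}---and your proposal correctly identifies it as such and defers to the same reference, so the two approaches coincide. Your supplementary Selberg--Delange sketch is essentially sound (the final conclusion that the Dirichlet series is $\bigl(\zeta(s)L(s,\chi_{-4})\bigr)^{1/2}$ times a factor holomorphic and non-vanishing near $s=1$ is correct, though your displayed intermediate Euler product gives the $p\equiv 1 \pmod 4$ factors the exponent $-1/2$ where the actual series has $-1$), but none of that is needed to match the paper.
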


\begin{thm}
The sequence $(s_2(n))_{n \in \mathbb{N}}$ is not morphic.
\end{thm}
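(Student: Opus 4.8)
The plan is to apply Theorem~\ref{morphic} directly, with the alphabet $A = \{0,1\}$ and the distinguished letter $a = 1$. The whole content of the argument is to match the asymptotic density of $1$'s in $(s_2(n))_{n \in \mathbb{N}}$ with the hypothesis of the theorem. By definition, $|s_2[0,N]|_1$ counts exactly the integers $n$ with $0 \leq n \leq N$ and $s_2(n) = 1$, which is precisely the quantity $B(N)$ from Proposition~\ref{density}. So the first step is simply to invoke Proposition~\ref{density} to get
$$|s_2[0,N]|_1 = B(N) \sim K \frac{N}{\sqrt{\log(N)}} = K \frac{N}{(\log(N))^{1/2}}.$$

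The second step is to observe that this is exactly the form required in the statement of Theorem~\ref{morphic}, with the choice $\gamma = 1/2$ (which satisfies $0 < \gamma < 1$) and $C = K > 0$ the Landau--Ramanujan constant (which is positive). Since $(s_2(n))_{n \in \mathbb{N}}$ is a sequence over the finite alphabet $\{0,1\}$ and we have exhibited a letter whose counting function has the prohibited asymptotic growth, Theorem~\ref{morphic} immediately yields that $(s_2(n))_{n \in \mathbb{N}}$ is not morphic. One small bookkeeping point worth stating explicitly: $(s_2(n))_{n\in\mathbb N}$ is not eventually constant (there are infinitely many non-sums-of-two-squares, e.g.\ numbers of the form $4^a(8b+7)$ or simply the primes $\equiv 3 \pmod 4$), so $B(N) \to \infty$ and $N - B(N) \to \infty$ and there is genuinely content in the density statement; but this is not needed for the logical implication, which is purely an instance of Theorem~\ref{morphic}.

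There is essentially no obstacle here: the substance of the paper is Theorem~\ref{morphic} together with the classical Landau result quoted as Proposition~\ref{density}, and the final theorem is the one-line combination of the two. If anything, the only thing to be careful about is that Theorem~\ref{morphic} is stated with a $\sim$ (true asymptotic equivalence), so one must make sure the Landau--Ramanujan estimate is indeed an asymptotic equivalence and not merely an order-of-magnitude bound --- which it is, in the classical sharp form. Hence the proof is just: by Proposition~\ref{density}, $|s_2[0,N]|_1 \sim K N/(\log N)^{1/2}$, and by Theorem~\ref{morphic} with $\gamma = 1/2$ and $C = K$, the sequence is not morphic. \QEDB
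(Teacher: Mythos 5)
Your proof is correct and follows exactly the same route as the paper, which simply cites Theorem~\ref{morphic} and Proposition~\ref{density} as a direct consequence; you have merely spelled out the instantiation $\gamma = 1/2$, $C = K$ explicitly. No further comment is needed.
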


\begin{proof}
It is a direct consequence if Theorem \ref{morphic} and Proposition \ref{density}.
\end{proof}

\begin{thm}
Let $(s'_2(n))_{n \in \mathbb{N}}$ be a $0,1$ sequence such that, for any natural number $n$, $s'_2(1)=1$ if and only if $n$ is a sum of two {\em non-zero} squares. Then $(s'_2(n))_{n \in \mathbb{N}}$ is not morphic.
\end{thm}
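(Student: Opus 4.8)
The plan is to reduce the claim about $(s'_2(n))_{n\in\mathbb{N}}$ to Theorem~\ref{morphic} exactly as was done for $(s_2(n))_{n\in\mathbb{N}}$, by showing that the counting function of the letter $1$ in $(s'_2(n))_{n\in\mathbb{N}}$ has the same asymptotic shape $C\,N/(\log N)^{\gamma}$ with $0<\gamma<1$. Concretely, let $B'(N)$ denote the number of integers $n$ with $0\le n\le N$ and $s'_2(n)=1$. It suffices to prove that $B'(N)\sim K'\,N/\sqrt{\log N}$ for some positive constant $K'$; then applying Theorem~\ref{morphic} with $a=1$, $\gamma=1/2$, $C=K'$ gives non-morphicity immediately.

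The key step is therefore the asymptotic estimate for $B'(N)$. The set of integers that are sums of two \emph{non-zero} squares differs from the set counted by $B(N)$ only by the integers that are squares $m^2$ but not otherwise representable as a sum of two positive squares; more precisely, $n$ is a sum of two non-zero squares iff $n$ is a sum of two squares and $n$ is not of the form $m^2$ for which the only representations use a zero summand. The cleanest route is: first observe that an integer $n$ is a sum of two non-zero squares iff in the standard characterization (every prime $\equiv 3\pmod 4$ occurs to an even power) $n$ has at least one prime factor $\equiv 1\pmod 4$, or $n$ is twice such a number, or more simply $n/\,(\text{largest square dividing its "}3\bmod 4\text{" part})$ is not a perfect square --- but rather than chase the exact arithmetic condition, I would argue by a sandwich. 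We have $B'(N)\le B(N)$ trivially, and on the other hand $B(N)-B'(N)$ is at most the number of perfect squares up to $N$ that are sums of two squares, which is at most $\lfloor\sqrt{N}\rfloor+1=O(\sqrt N)$. Since $B(N)\sim K\,N/\sqrt{\log N}$ by Proposition~\ref{density} and $\sqrt N=o\!\left(N/\sqrt{\log N}\right)$, we conclude $B'(N)\sim K\,N/\sqrt{\log N}$ as well, i.e. $K'=K$.

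With that asymptotic in hand the proof finishes in one line: the sequence $(s'_2(n))_{n\in\mathbb{N}}$ satisfies the hypothesis of Theorem~\ref{morphic} with the letter $a=1$, $\gamma=\tfrac12$ and $C=K$, hence it is not morphic. The only point requiring a touch of care --- and the place I expect a referee to look hardest --- is the justification that the "defect set" $\{\,n\le N : s_2(n)=1,\ s'_2(n)=0\,\}$ is contained in the set of perfect squares up to $N$ (equivalently, that any non-square which is a sum of two squares is automatically a sum of two \emph{positive} squares). This is elementary: if $n=a^2+0^2=a^2$ is a non-trivial sum only through a zero summand then $n$ is a square, so the contrapositive gives exactly the containment; one should also note the harmless finite adjustments at $n=0,1,2$. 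Everything else is the same boilerplate as in the preceding theorem, so no further computation is needed.
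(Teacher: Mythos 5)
Your proposal is correct and follows essentially the same route as the paper: bound the ``defect'' $B(N)-B'(N)$ by the number of perfect squares up to $N$, which is $O(\sqrt{N})=o\bigl(N/\sqrt{\log N}\bigr)$, deduce $B'(N)\sim K\,N/\sqrt{\log N}$ from Proposition~\ref{density}, and invoke Theorem~\ref{morphic}. Your added justification that the defect set consists only of perfect squares is a welcome bit of extra care that the paper leaves implicit.
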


\begin{proof}
Similarly for any natural number $N$, define $B'(N)$ to be the number of integers $n$ such that $o \leq n \leq N$ ans $s'_2(n)=1$. Obviously,
$$|B(N)-B'(N)| \leq \sqrt{N} =o(K\frac{N}{\log(N)}),$$
with the same notation in Proposition \ref{density}.
Thus, $$B'(N)\sim K\frac{N}{\sqrt{\log(N)}},$$
and on can conclude using Theorem \ref{morphic}.
\end{proof}

\section {Discussion}

The key technique of this proof is base on the estimation of $B(N)$, which is obtained in using the fact that $(s_2(n))_{n \in \mathbb{N}}$ is multiplicative. For any pair of integers $r,t$, let $(s_{r,t}(n))_{n \in \mathbb{N}}$ be the sequence characterizing the integers that are the sum of r numbers of $k$-power. The technique applied in \cite{Landau08} may not useful to estimate the number of integers $n$ satisfying $n \leq N$ and $s_{r,t}(n)=1$.\\

The author believes the following generalization of Theorem \ref{morphic} is true:
\begin{conj}
Let $(a_n)_{n \in \mathbb{N}}$ be a morphic sequence over the alphabet $A$. If there exists $a \in A$, two positive numbers $0 < \gamma < 1$, $C >0$ and an increasing sequence of integers $N_1<N_2<\cdots <N_k<\cdots$ such that $$|a[0,N_k]|_a \sim C\frac{N_k}{(\log(N_k))^{\gamma}},$$ when $k$ is large, then $(a_n)_{n \in \mathbb{N}}$ is not morphic.  
\end{conj}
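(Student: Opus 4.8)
The plan is to argue by contradiction, exploiting the tension between the prescribed asymptotic $C\,N/(\log N)^{\gamma}$, whose logarithmic factor carries a \emph{non-integer} exponent, and the rigid algebraic shape that every letter-counting function of a morphic sequence is forced to take. So I would assume that $(a_n)_{n\in\mathbb{N}}$ is morphic and that some letter $a$ satisfies $|a[0,N]|_a\sim C\,N/(\log N)^{\gamma}$, and then derive an impossible identity between integer exponents.

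First I would invoke Corollary \ref{sequence} to obtain the increasing sequence of prefix lengths with $N_k\sim G(Tk)^l\alpha^{Tk}$, together with the companion estimate $|a[0,N_k]|_a\sim G'(Tk)^m\sum_{i=1}^s\beta_i^{Tk}$, where each $\beta_i$ has modulus $\beta\le\alpha$. Because the hypothesis holds for \emph{every} $N$, it holds along the subsequence $N_k\to\infty$, so I may substitute the closed form for $N_k$ into $|a[0,N_k]|_a\sim C\,N_k/(\log N_k)^{\gamma}$ and reduce the whole problem to comparing the two expressions for large $k$. The comparison then splits into the three cases $\beta<\alpha$, $\beta=\alpha=1$, and $\beta=\alpha>1$.

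The first two cases are soft. If $\beta<\alpha$ then $|a[0,N_k]|_a=O(N_k^{c})$ with $c=\log\beta/\log\alpha<1$ (and the count is even bounded when $\beta\le1$), whereas $N_k/(\log N_k)^{\gamma}=N_k^{1-o(1)}$; a quantity of polynomial order strictly below $1$ cannot be asymptotically equivalent to one of order $1-o(1)$. If $\beta=\alpha=1$ then $N_k\sim G(Tk)^l$ and $|a[0,N_k]|_a\sim G'(Tk)^m$ are pure powers of $k$, while $C\,N_k/(\log N_k)^{\gamma}$ drags along a genuine $(\log k)^{-\gamma}$ factor that no power of $k$ can match. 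The decisive case is $\beta=\alpha>1$. Here I would use the uniqueness clause of Proposition \ref{growth} to collapse the sum $\sum_{i=1}^s\beta_i^{Tk}$ to the single real term $\alpha^{Tk}$, so that $|a[0,N_k]|_a\sim G'(Tk)^m\alpha^{Tk}$; meanwhile $\log N_k\sim Tk\log\alpha$ gives $(\log N_k)^{\gamma}\sim(Tk\log\alpha)^{\gamma}$ and hence $C\,N_k/(\log N_k)^{\gamma}\sim C''(Tk)^{l-\gamma}\alpha^{Tk}$. Cancelling the common exponential would force $(Tk)^m\sim C''(Tk)^{l-\gamma}$, i.e. the exponent equation $m=l-\gamma$, which is impossible since $m,l\in\mathbb{Z}$ and $0<\gamma<1$.

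I expect the genuine obstacle to be this last case, and specifically the moment where I lean on the uniqueness statement of Proposition \ref{growth}: one must be sure that when the dominant modulus $\beta$ coincides with the overall growth rate $\alpha$ the combination $\sum_{i=1}^s\beta_i^{Tk}$ is not a truly oscillating superposition of several complex numbers of modulus $\alpha$, but a single positive real exponential, since otherwise the left-hand count need not even admit a clean asymptotic of the form $G'(Tk)^m\alpha^{Tk}$ against which to compare. A minor but necessary preliminary is to check that $N_k\to\infty$, so that $\log N_k$ is meaningful and the all-$N$ hypothesis legitimately transfers to the subsequence; this I would justify from the sequence being infinite and $\phi$ non-erasing.
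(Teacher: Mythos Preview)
Your argument proves Theorem~\ref{morphic}, not the Conjecture. The crucial difference between the two statements is the quantifier on $N$: Theorem~\ref{morphic} assumes $|a[0,N]|_a\sim C\,N/(\log N)^{\gamma}$ for \emph{all} $N\to\infty$, whereas the Conjecture only assumes this along \emph{some} increasing sequence $N_1<N_2<\cdots$ supplied in the hypothesis. You explicitly invoke the stronger premise when you write ``Because the hypothesis holds for \emph{every} $N$, it holds along the subsequence $N_k\to\infty$'', but that premise is simply not available here.

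The heart of the difficulty is a clash of subsequences. Corollary~\ref{sequence} produces a \emph{specific} sequence of prefix lengths, say $M_k=|\phi^k(b)|$, along which both $M_k$ and $|a[0,M_k]|_a$ admit the closed forms $G(Tk)^l\alpha^{Tk}$ and $G'(Tk)^m\sum_i\beta_i^{Tk}$. There is no reason the hypothesis sequence $(N_k)$ of the Conjecture should coincide with, or even be comparable to, $(M_k)$; your three-case analysis lives entirely on the $(M_k)$ side and never makes contact with the given $(N_k)$. Bridging this gap---either by transferring the asymptotic from the arbitrary $(N_k)$ to the morphism-adapted $(M_k)$, or by controlling $|a[0,N]|_a$ uniformly enough in $N$ to interpolate between consecutive $M_k$---is precisely why the paper states this as a conjecture and offers no proof.
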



\bibliographystyle{splncs03}
\bibliography{biblio}

\end{document}